\documentclass[12pt]{amsart}

\usepackage{graphicx}
\usepackage{hyperref}
\usepackage{url}

% math fonts

\usepackage{mathrsfs}
\usepackage{dsfont}
\usepackage{diagrams}

\usepackage{amsthm}

\theoremstyle{plain}
\newtheorem{thm}{Theorem}[section]
\newtheorem{lem}[thm]{Lemma}

\newtheorem{lem-defn}[thm]{Lemma-Definition}
\newtheorem{prop-defn}[thm]{Proposition-Definition}

\newtheorem*{thm*}{Theorem}
\newtheorem*{lem*}{Lemma}
\newtheorem*{prop*}{Proposition}
\newtheorem*{cor*}{Corollary}

\theoremstyle{definition}
\newtheorem{defn}[thm]{Definition}

\newtheorem*{defn*}{Definition}
\newtheorem*{exa*}{Example}
\newtheorem*{prob*}{Problem}

\theoremstyle{remark}
\newtheorem{rmk}[thm]{Remark}

\newtheorem*{rmk*}{Remark}
\newtheorem*{claim*}{Claim}
% Proper Script letters, more fonts
\usepackage{mathrsfs}
\usepackage{amssymb}
\usepackage{dsfont}
\usepackage{verbatim}
\usepackage{url}

% Basic Algebraic Objects

\def\Z{\mathbb {Z}}

\def\R{\mathbb {R}}
\def\C{\mathbb {C}}

\def\jq{\mathbf {j}}
\def\kq{\mathbf {k}}
\def\mod{\textrm {mod}}

% Operators & Relations

\def\from{\colon}

% Additional Operators

\DeclareMathOperator{\abl}{Abl}

\DeclareMathOperator{\gir}{Girth}

\DeclareMathOperator{\Mor}{Mor}

% Logic

% Other constructs

% Shorthand
  % one half

\DeclareRobustCommand
  \rddots{\mathinner{\mkern1mu\raise\p@
    \vbox{\kern7\p@\hbox{.}}\mkern2mu
    \raise4\p@\hbox{.}\mkern2mu\raise7\p@\hbox{.}\mkern1mu}}

% arrows
\newcommand\xhookrightarrow[2][]{\ext@arrow 0062{\hookrightarrowfill@}{#1}{#2}}
\def\hookrightarrowfill@{\arrowfill@\lhook\relbar\rightarrow}

% classical groups

% lie algebras

% Fonts
%\newcommand\AA{\mathbb{A}} -- (Swedish Å) use \Adele instead

\newcommand\FF{\mathbb{F}}

\DeclareMathAlphabet{\mathcal}{OMS}{cmsy}{m}{n}

% For integration
% LaTeX \dh doesn't seem to be uesful -- redefined
% \do, \dp are internals
% 

%\newcommand\dj{\diff j}

%\newcommand\do{\diff o}
%\newcommand\dp{\diff p}

% Adeles & alg gps

% automorphic forms

% Cohomology groups

% standard abbereviations

% references

% Categories

% Names

\newcommand\field\FF

\newcommand\real\R
\newcommand\complex\C
\newcommand\integers\Z

\newcommand{\red}{{\rm red}}

  % twist intersect

\newcommand{\girth}{{\rm girth}}

\tolerance = 10000

\begin{document}
\title{Abelian Girth and Girth}
\author{Joel Friedman}
\address{Department of Computer Science, The University of British Columbia, 2366 Main Mall, V6T 1Z4,  Vancouver BC, Canada, and Department of Mathematics, The University of British Columbia, 1984 Mathematics Road, V6T 1Z2,  Vancouver BC, Canada}
\email{jf@cs.ubc.ca or jf@math.ubc.ca}
\thanks{Research of all authors partly supported by an NSERC Discovery Grants.}

\author{Alice Izsak}
\address{Department of Computer Science, The University of British Columbia, 2366 Main Mall, V6T 1Z4,  Vancouver BC, Canada}
\email{aizsak@cs.ubc.ca or aizsak@gmail.com}

\author{Lior Silberman}
\address{Department of Mathematics, The University of British Columbia, 1984 Mathematics Road, V6T 1Z2,  Vancouver BC, Canada}
\email{lior@math.ubc.ca}
\urladdr{http://www.math.ubc.ca/~lior}
% \thanks{L.S.\ was partly supported by an NSERC Discovery Grant}

\subjclass[2010]{05C38}
\keywords{graph theory, girth}

\begin{abstract}
We show that the abelian girth of a graph is at least three times
its girth.
We prove an analogue of the Moore bound for
the abelian girth of regular graphs, 
where the degree of the graph is fixed and the number of vertices
is large.
We conclude that one could try to improve the Moore bound for
graphs of fixed degree and many vertices by trying to improve
its analogue concerning the abelian girth.
\end{abstract}

\maketitle
\tableofcontents

\section{Introduction}

In the work \cite{friedman_hnc_memoirs} the first named author introduced the
\emph{abelian girth} of a graph, a graph invariant, and used it in a proof of
the Hanna Neumann Conjecture.  In this paper we study the abelian girth
and its relationship to the ordinary girth and to the volume bound on girth
known as the Moore bound.
Specifically, we give some evidence to show that bounding the abelian
girth from above is a possible approach to improving the Moore bound
for regular graphs of fixed degree with a large number of vertices.
Let us make this more precise.

The \emph{girth} of a graph is the length of the shortest non-trivial cycle
in the graph.  
If $G_{n,d}$ is any $d$-regular graph on $n$ vertices, it is known that
for fixed $d$ and large $n$ we have
\begin{equation}\label{eq:Moore_regular}
{\rm Girth}(G_{n,d}) \le 2 \log_{d-1}n + o_n(1),
\end{equation}
and we are interested to know if the factor of $2$ can be improved upon.
This bound follows from the {\em Moore bound} (see \cite{erdos66}.)
Although the Moore bound for regular graphs is very easy to prove,
there has been only slight improvements to it in the last 50 years---only
an additive constant of one or two. In particular, 
the multiplicative factor of $2$ in
\eqref{eq:Moore_regular}
is the best factor known to date.
Graphs of large girth have numerous applications 
(\cite{alon07,thorup,mackay96}) and much has been written on the
girth and the Moore bound (see \cite{amit}).

The {\em abelian girth} of a graph, $G$, denoted $\abl(G)$, is the 
girth of its {\em universal abelian cover}, or equivalently the shortest
length of a non-trivial, closed, non-backtracking walk that traverses
each edge the same number of times in each direction.
The abelian girth is
important in sheaf theory on graphs and the first proof of the Hanna Neumann
Conjecture \cite{friedman_hnc_memoirs};
furthermore, in a forthcoming paper we will show the abelian girth is
strongly related to what we call
``gapped'' sheaves on a graph
\cite{fis2}.
In this paper we show that there is an analogue of the Moore bound for the
abelian girth, and improving this analogue would improve the Moore bound.
Furthermore, we provide evidence that such an improvement may be possible.

Specifically, we show that
\begin{equation}\label{eq:main1}
\girth(G) \leq 3\abl(G),
\end{equation}
for any graph, $G$.  Second, we show that there is an argument analogous
to the Moore bound that shows that for fixed $d$ we have
\begin{equation}\label{eq:main2}
\abl(G_{n,d}) \le 6\log_{d-1}n + o_n(1) 
\end{equation}
for any $d$-regular graph on $n$ vertices, $G_{n,d}$; we remark
that the proof of this theorem is not as immediate as that of the Moore bound.
It follows that any improvement to the factor of $6$
in \eqref{eq:main2} would give an improvement to the factor of $2$
in \eqref{eq:Moore_regular}.

Ideally we would show that all known explicit constructions of families
of $d$-regular graphs have abelian girth at most
$c\log_{d-1}n $ for some $c<6$. In this paper,
we will focus on the only family of $d$-regular graphs with $d$ fixed
which has girth greater than $\log_{d-1}n $.
It is known that the factor of $2$ in \eqref{eq:Moore_regular} cannot
be less than $4/3$, at least for certain $d$: indeed,
\cite{LPS} constructs graphs, $X^{p,q}$, for primes $p,q\equiv 1 \pmod{4}$,
that are $d=p+1$ regular on $n=q(q^2+1)$ vertices for which 
$$
{\rm girth}(X^{p,q}) = (4/3) \log_{d-1} n + o_n(1)
$$
for fixed $d=p+1$ and large $n=q(q^2+1)$. Margulis \cite{margulisgirth}
independently constructed similar graphs with girth asymptotically as large as
that of the graphs from \cite{LPS}.
Furthermore there are
no known families of graphs which improve on the above $4/3$; in fact,
for general $d$, the best girth lower bound is $4/3$ replaced with $1$,
by choosing a random graph and slightly modifying it \cite{ellislinial}, \cite{esachs}.
To show that it is plausible to improve on the factor of $6$ in
\eqref{eq:main2}, we will show that
\begin{equation}\label{eq:main4}
\abl(X^{p,q}) \le  (16/3) \log_{d-1} n + o_n(1) 
\end{equation}
which suggests that there is room for the factor of $6$ to decrease.
We conjecture that the $16/3$ can be replaced with $4$, for reasons
we shall explain later.
We are unaware of any other graph constructions in the literature which improves
upon the $16/3$ above.

To prove \eqref{eq:main1}, we
prove a fundamental lemma that suggests many possible generalizations
of the above discussion of abelian girth and the Moore bound.
First we remark that the girth of a graph is smallest positive length of 
a cycle
that embeds in the graph.
Our fundamental lemma states that the abelian girth 
is the smallest number of
edges in a graph of Euler characteristic $-1$ that embeds in the graph,
provided that we weight the edges appropriately: 
namely, we count each
edge twice, except that in ``barbell graphs'' we count each edge in the
bar four times.

We remark that both girth and abelian girth can be viewed as linear
algebraic invariants of graphs;
indeed, girth 
is often studied as a property of the adjacency matrix of the graph 
(see, for example, \cite{amit}), and the abelian girth relates to sheaves
of vector spaces over the graph.
So our lemma fundamental to proving \eqref{eq:main1} also suggests that
there could be other such girth-type invariants, both
(1) arising as linear algebraically from the graphs, and
(2) satisfying inequalities analogous to \eqref{eq:main1}.

The rest of this paper is organized as follows.
In Section~2 we give some precise definitions and state our main
theorems.
In Section~3 we prove \eqref{eq:main2}.
In Section~4 we prove the fundamental lemma as well as
\eqref{eq:main1} which follows quickly from the fundamental lemma.
In Section~5 we describe the graphs $X^{p,q}$ from \cite{LPS} and show that
they obey Equation ~\eqref{eq:main4}.

\section{Main Results}

In this section we fix some terminology regarding graphs and
formally state our main results.

\subsection{Graph Terminology}

This entire subsection consists of definitions used throughout
this paper; they are more or less standard.

By a \emph{graph}
we shall mean a quadruple $G=(V_G,E_G,t_G,h_G)$
where $V_G$ and $E_G$ are sets 
(the ``\emph{vertices}'' and ``\emph{edges}'',
respectively) and $t_G,h_G \from E_G\to V_G$ are maps (the ``tail'' and
``head'' of each edge, respectively).
The {\em Euler characteristic} of $G$ is
$$
\chi(G) = |V_G| - |E_G| ,
$$
where $|\ \cdot\ |$ denotes the cardinality, provided that $G$ is finite,
i.e., that $|V_G|$ and $|E_G|$ are finite.
We define the {\em directed edge set} of $G$ to be
$$
D_G = E_G \times \{ +,-\},
$$
and we extend $h_G$ and $t_G$ to be functions on $D_G$ such that for $e\in E_G$
$$
h_G(e,+) = t_G(e,-) = h_G(e), \quad t_G(e,+) = h_G(e,-) = t_G(e);
$$
for $e \in E_G$ we say that $(e, +)$ and $(e, -)$ are $\emph{inverses}$ of each other.
We denote the inverse of $u\in D_G$ by $u^{-1}$.
A \emph{walk} of
\emph{length} $m$ in $G$ is
a sequence of directed edges
$$
w=(u_1,\ldots, u_m)
$$
such that the head of $u_i$ is the tail of $u_{i+1}$ for $i=1,\ldots,m-1$;
we define the vertices $t_G(u_1)$ and $h_G(u_m)$ to be
the \emph{starting} and \emph{terminating} vertices of $w$,
jointly the \emph{endpoints} of $w$, and the {\em length} of $w$ to be $m$,
denoted $l(w)$;
we say that $w$ is {\em closed} if its two endpoints are the same vertex;
we refer to the {\em edges} of $w$ as the $e\in E_G$ such that at
least one of $(e,+),(e,-)$ is an directed edge of $w$;
we say that $w$ is \emph{non-backtracking} if there is no $i=1,\ldots,m-1$
for which $u_i^{-1}=u_{i+1}$;
we say that $w$ is {\em strongly closed, non-backtracking} if $w$
is closed and non-backtracking, and $u_1,u_m$ are not inverses of each other;
we say that $w$ is a {\em path} (respectively, {\em cycle}) if $w$ is 
non-backtracking with distinct endpoints (respectively strongly closed,
non-backtracking),
and each edge $e\in E_G$ appears at most once in $w$ (i.e.
$(e,+)$ and $(e,-)$ appears at most once in $u_1,\ldots,u_m$).
The \emph{inverse} of  $w$ is defined as the walk 
$$w^{-1} = (u_m^{-1},\ldots, u_1^{-1}).$$

Given a walk $w=(u_1,\ldots, u_m)$ and a walk $k = (t_1, \ldots, t_n)$ such that
the terminating vertex of $w$ is the starting vertex of $k$,
we define the \emph{product} $wk$  to be the walk $(u_1,\ldots, u_m, t_1, \ldots, t_n)$.
We say that a walk $w$ as above {\em joins} its two endpoints.
We say that 
$G$ is \emph{connected} if any two of its vertices are joined by some walk
in $G$.

The contiguous appearance of an edge and its inverse in a walk is
called a {\em reversal}.
Each walk, $w$, can be {\em reduced} by successively discarding
its reversals; the walk
obtained is the {\em reduction} of $w$ and is known to be independent of the choice of
pairs to reduce; the reduction therefore contains no reversals and is non-backtracking \cite{dicks}.
We use the notation
$\red(w)$ for the reduction of $w$. 
Note that $\red(w^{-1}) = \red(w)^{-1}$ for any walk $w$ since
the portion of a walk that are removed by reduction remain the same when we 
take the inverse of that walk.

\begin{defn}
In each element $\omega$ of $\pi_1(G,u)$, there exists one unique non-backtracking walk in $G$;
any walk in $\omega$ reduces to that unique non-backtracking walk \cite{dicks}. By the \emph{length} of 
$\omega$, denoted $|\omega|$, we mean the length of the unique non-backtracking walk
in $\omega$. Note this is not word length with respect to a set of generators of $\pi_1(G,u)$. Similarly,
even though closed walks are members of homotopy classes in the fundamental group,
when we refer to the length of a walk or the reduction of a walk $w$  we mean length and reduction
as defined earlier and not in with respect to a set of generators for a free group. 

\end{defn}

As usual, a \emph{morphism} of graphs, $f\from G\to H$, is a pair
$f=(f_V,f_E)$ of maps $f_V\from V_G\to V_H$ and
$f_E\from E_G\to E_H$ such that
$t_H\circ f_E=f_V \circ t_G$ and $h_H\circ f_E=f_V \circ h_G$.
We often drop the subscripts from $f_V$ and $f_E$.
The set of morphisms
will be denoted $\Mor(G,H)$.  Thus morphisms are homomorphisms of the
underlying undirected graphs which preserve the orientation.

As usual, the {\em girth} of a graph is the length of its shortest closed,
non-backtracking walk; this length is necessarily positive by our conventions
above.  (In the literature one often allows for walks of length zero, which
we do not consider here.)

\subsection{Our Fundamental Lemma}

In this subsection we discuss our main results.

\begin{defn}
The {\em abelian girth} of a graph, $G$, denoted $\abl(G)$,
is the minimum $m\ge 1$ such that there
is a closed, non-backtracking walk
$$
w=(u_1,\ldots,u_m)
$$
such that each edge is traversed the same number of times in $w$ in
both directions, i.e., for each $e\in E_G$ the edge $(e,+)$ appears the same
number of times among $u_1,\ldots,u_m$ as does $(e,-)$.
\end{defn}

The abelian girth is the same as the girth of the {\em universal abelian
cover} of $G$; see \cite{friedman_hnc_memoirs}. Given a walk $w$ and an $e\in E_G$,
if $(e,+)$ appears $i_+$ times and
$(e,-)$ appears $i_-$ times
we say $e$ \emph{appears a net $i_+-i_-$ times} in $w$.
We refer to a walk as \emph{
edge neutral} if every edge appears a net $0$ times in it.

We begin with a fundamental lemma.
We remind the reader that our conventions insists that cycles and
paths are of positive length.
\begin{defn}
Let $G$ be a connected graph of Euler characteristic $-1$ without leaves,
i.e., without vertices of degree one.
We say that $G$ is 
\begin{enumerate}
\item a {\em figure-eight} graph if $G$ consists of
two cycles, mutually edge disjoint, sharing the same endpoint;
\item a {\em barbell} graph if $G$ consists of two cycles, $w_1,w_2$,
and one path 
$b$, all mutually edge disjoint, such that $b$ joins the endpoints 
of $w_1$ and $w_2$; we refer to
$b$ as the {\em bar} of $G$;
\item a {\em theta} graph if $G$ consists of two vertices joined by
three paths mutually edge disjoint.
\end{enumerate}
\end{defn}

It is well known that the above three cases classifies all connected
graphs of Euler characteristic $-1$ without leaves
(see, for example, \cite{linial_puder}.)

\begin{defn}
Let $G$ be a connected graph of Euler characteristic $-1$ without leaves.
We define the \emph{abelian length} of $G$, denoted $l_{\abl}(G)$,
to be twice its number of edges except that each edge of its bar
(if $G$ is a figure-eight graph) is counted four times.
\end{defn}

\begin{rmk}
Observe that in each case, the abelian length of such $G$ is its abelian
girth.  This is clear for the figure-eight and the theta (traverse each cycle twice) and not hard for the barbell (must traverse each cycle twice,
in reverse orientations, so must traverse the bar between each of the four
cycle traversals).
\end{rmk}

\begin{lem}\label{lem:fundamental}\emph{(The Fundamental Lemma)}
For any graph, $G$, we have
$$
\abl(G) = \min_{G'\subset G} l_{\abl}(G')
$$
taken over all subgraphs, $G'$, that are connected, of Euler characteristic
$-1$, and without leaves.
If no such $G'$ exist, then the above minimum is taken to be infinity,
as is the abelian girth of $G$, and there are no closed non-backtracking
walks that traverse each edge the same number of times in both directions.
\end{lem}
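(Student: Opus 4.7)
To prove $\abl(G) \le \min_{G' \subseteq G} l_{\abl}(G')$ I would, for each connected leafless $G' \subseteq G$ with $\chi(G')=-1$, exhibit an explicit edge-neutral, non-backtracking, closed walk of length $l_{\abl}(G')$ in $G'$. The classification of such $G'$ reduces this to three cases: for a figure-eight with loops $\alpha,\beta$ based at the common vertex, use the commutator $\alpha\beta\alpha^{-1}\beta^{-1}$; for a theta with three internally-disjoint paths $p_1,p_2,p_3$ between the branch vertices, use $p_1 p_2^{-1} p_3 p_1^{-1} p_2 p_3^{-1}$; for a barbell with cycles $\alpha,\beta$ and bar $b$, use $\alpha\,b\,\beta\,b^{-1}\,\alpha^{-1}\,b\,\beta^{-1}\,b^{-1}$. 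A direct check confirms each is non-backtracking, closed, edge-neutral, and of the right length. The case $\abl(G)=\infty$ is handled by contrapositive: if some valid $G'$ existed, the construction above would give a finite edge-neutral walk in $G$.

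For the reverse inequality, assuming $\abl(G)<\infty$ I would fix a shortest edge-neutral non-backtracking closed walk $w$ and let $G_w$ denote the subgraph of $G$ consisting of the edges traversed by $w$. Routine arguments show $G_w$ is connected and leafless (a leaf would force backtracking), and $\chi(G_w)\le-1$: non-triviality of $w$ rules out a tree, while the only closed non-backtracking walks on a single cycle are monotonic traversals, which cannot be edge-neutral. Edge-neutrality also forces each edge of $G_w$ to be traversed at least twice by $w$.

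The central technical step is the \emph{bridge inequality}: every bridge $e$ of $G_w$ is traversed at least four times by $w$. Removing $e$ splits $G_w$ into two leafless components $G_1,G_2$, each of which must contain a cycle. If $w$ crossed $e$ only twice, then the single excursion of $w$ inside $G_2$ would itself be a strictly shorter edge-neutral non-backtracking closed walk in $G$, violating the minimality of $|w|$.

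Finally I would extract $G'$ using the decomposition of $G_w$ into its maximal $2$-edge-connected subgraphs joined by bridges. If some such block has rank $\ge 2$, it contains a figure-eight or theta subgraph $G'$ with $l_{\abl}(G')=2|E(G')|\le 2|E(G_w)|\le|w|$. Otherwise each block is either a single cycle or a single bridge, and there exist two cycle-blocks $C_1,C_2$ whose connecting path in $G_w$ consists entirely of bridges; these together with that path form a barbell $G'$, and the bridge inequality yields
\[
|w|\;\ge\;2|E(G_w)|+2\cdot(\text{number of bridges of }G_w)\;\ge\;2|E(G')|+2|b(G')|\;=\;l_{\abl}(G').
\]
The principal obstacle is the bridge inequality, whose proof hinges on the minimality of $w$ and a careful splitting of $w$ at a bridge crossing; the remainder is combinatorial bookkeeping via the block decomposition.
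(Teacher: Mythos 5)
Your proof is correct and follows the same high-level strategy as the paper's: take the support $B$ of a shortest edge-neutral non-backtracking closed walk $w$, show $B$ is connected, leafless, and of Euler characteristic $\le -1$, observe each edge is crossed at least twice and each bridge at least four times (by cutting $w$ at a bridge traversed only twice to get a shorter admissible walk), and then extract a theta, figure-eight, or barbell. The differences are organizational. The paper argues by contradiction: it shows $B$ can contain neither a theta/figure-eight subgraph (else a shorter walk exists) nor a bridge (else $B$ collapses to a barbell via an explicit replacement walk $c_1\,p\,c_2\,p^{-1}\,c_1^{-1}\,p\,c_2^{-1}\,p^{-1}$), and then observes these two exclusions are incompatible for a graph of rank $\ge 2$. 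You instead run a direct case split over the bridge/$2$-edge-connected-block decomposition of $B$ and, in the all-cycles-and-bridges case, replace the paper's constructive walk replacement with the clean counting inequality $|w| \ge 2|E(G_w)| + 2\cdot(\text{number of bridges}) \ge l_{\abl}(G')$. Your version is somewhat more transparent in the barbell bookkeeping and makes the ``bridge inequality'' an explicit stand-alone step, whereas the paper folds it into the claim that a bridge forces $B$ to be (replaceable by) a barbell; but the underlying combinatorial content is the same. One small point worth stating explicitly in your write-up: a minimal $w$ is automatically strongly closed (otherwise a cyclic rotation and one reduction yields a shorter admissible walk), which is what licenses the cyclic rotation of $w$ to start at a chosen bridge traversal in the bridge-inequality argument.
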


By the observation above, the upper bound is clear, and the content is in
the lower bound.

\subsection{Main Results in this Paper}

Now we can easily state the other main results in this paper.

\begin{thm}\label{thm:girths}
For any graph we have
$$
\gir(G) \le \abl(G)/3.
$$
\end{thm}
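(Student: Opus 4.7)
My plan is to invoke the Fundamental Lemma (Lemma \ref{lem:fundamental}), which identifies $\abl(G)$ with $\min_{G'\subseteq G} l_{\abl}(G')$ as $G'$ ranges over connected, leafless subgraphs of $G$ of Euler characteristic $-1$. This reduces the target inequality $3\,\gir(G) \le \abl(G)$ to showing that every such $G'$ satisfies $l_{\abl}(G') \ge 3\,\gir(G)$; if no such $G'$ exists, then $\abl(G) = \infty$ and the inequality is trivial. Since the discussion preceding the Fundamental Lemma classifies admissible $G'$ into exactly three topological types---figure-eight, barbell, and theta---I will handle the three cases separately.

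The common input for all three cases is that any cycle sitting inside $G'\subseteq G$ is in particular a strongly closed non-backtracking walk in $G$, and therefore has length at least $\gir(G)$. In the figure-eight case, with constituent cycle lengths $c_1,c_2$, this gives $l_{\abl}(G') = 2(c_1+c_2) \ge 4\,\gir(G)$. In the barbell case, with cycle lengths $c_1,c_2$ and bar length $\beta$, the bar-weighting in the definition of $l_{\abl}$ gives $l_{\abl}(G') = 2(c_1+c_2) + 4\beta \ge 4\,\gir(G)$. Both bounds are comfortably stronger than what is needed; the tight case, and the one that forces the constant $3$, is the theta. Here, with mutually edge-disjoint paths of lengths $p_1,p_2,p_3$ joining a common pair of vertices, each pair of paths $p_i,p_j^{-1}$ concatenates to a cycle of length $p_i+p_j$ in $G$, so $p_i+p_j \ge \gir(G)$ for every $i\ne j$. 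Summing these three pairwise inequalities yields $2(p_1+p_2+p_3) \ge 3\,\gir(G)$, \ie $l_{\abl}(G') \ge 3\,\gir(G)$. The only small verification along the way is that $p_i\cdot p_j^{-1}$ really is a cycle in the sense of this paper---strongly closed and non-backtracking---which follows immediately from the mutual edge-disjointness of the three paths.

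Combining the three cases, every admissible $G'$ satisfies $l_{\abl}(G') \ge 3\,\gir(G)$, and the Fundamental Lemma then concludes the proof. I expect no real obstacle at this stage: once Lemma \ref{lem:fundamental} is granted, Theorem \ref{thm:girths} amounts to a short bookkeeping exercise across the three subgraph topologies, and the tight constant $3$ arises exactly from the theta graph case. The genuine difficulty in the full argument therefore sits entirely in the Fundamental Lemma, whose lower bound demands extracting, from any edge-neutral non-backtracking closed walk, an embedded subgraph of one of the three prescribed types with a suitably efficient traversal.
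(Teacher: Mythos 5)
Your proposal is correct and follows essentially the same route as the paper's own proof: invoke the Fundamental Lemma to reduce to the three leafless Euler-characteristic $-1$ subgraph types, observe that figure-eight and barbell give $l_{\abl}(G') \ge 4\,\gir(G)$, and obtain the tight constant $3$ from the theta case by summing the three pairwise cycle inequalities $p_i+p_j \ge \gir(G)$. The only differences are cosmetic: you explicitly check that $p_i p_j^{-1}$ is a cycle in the paper's strict sense, and the paper records the slightly sharper $4+4\,\gir(G)$ for the barbell, neither of which affects the argument.
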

This theorem is an easy corollary of Lemma~\ref{lem:fundamental}.

\begin{thm}\label{thm:moore}
For any fixed $d$, let $G_{n,d}$ be any $d$-regular graph on $n$ vertices.
Then for large $n$ we have
$$
\abl(G) \le 6 \log_{d-1} n + o_n(1).
$$
\end{thm}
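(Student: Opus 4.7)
I would invoke the Fundamental Lemma (\lemref{fundamental}) by producing a theta subgraph $G' \subset G$ whose three paths have total length at most $3 \log_{d-1} n + o_n(1)$, so that
\[
  \abl(G) \le l_{\abl}(G') = 2 |E_{G'}| \le 6 \log_{d-1} n + o_n(1).
\]
First, the ordinary Moore bound gives a shortest cycle $C$ in $G$ of length $g \le 2 \log_{d-1} n + o_n(1)$; fix any vertex $v$ of $C$.

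The main step is a BFS from $v$ in the subgraph $G - E_C$, in which each vertex of $C$ has degree $d - 2$ and every other vertex has degree $d$. A standard Moore-type count bounds the BFS tree of depth $r$ from $v$ by $(d-1)^r$ vertices. Choosing $r$ to be the smallest integer with $(d-1)^r > n - g + 1$ gives $r = \log_{d-1} n + O(1)$; the BFS tree then cannot fit inside $V_G \setminus (V_C \setminus \{v\})$ without a collision. Hence it produces either (the theta case) a path $P$ of length at most $r$ in $G - E_C$ from $v$ to some other vertex $w$ of $C$, or (the figure-eight case) a cycle $C'$ through $v$ that is edge-disjoint from $C$, of length at most $2r \le 2 \log_{d-1} n + o_n(1)$.

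In the theta case, $G' = P \cup C$ is a theta subgraph with $|E_{G'}| = g + |P| \le 3 \log_{d-1} n + o_n(1)$, immediately finishing the proof. The figure-eight case is the main obstacle, since a priori it yields only $\abl(G) \le 2(g + |C'|) \approx 8 \log_{d-1} n$. I would resolve it by a two-step refinement: first, if $C'$ passes through some vertex $u$ of $C$ other than $v$, then a sub-arc of $C'$ from $v$ to $u$ plays the role of $P$, returning us to the theta case; otherwise $C'$ is vertex-disjoint from $V_C \setminus \{v\}$, and one iterates the construction with $C$ replaced by $C'$ and $v$ replaced by some vertex of $C'$ other than $v$. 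The hard step is to show that this iteration terminates at a theta of total path length at most $3 \log_{d-1} n + o_n(1)$; the key obstruction is that a persistent figure-eight cascade (vertex-disjoint new cycles appearing at every iteration) must eventually be incompatible with the Moore-type volume growth, and so forces a theta to appear within $O(1)$ iterations.
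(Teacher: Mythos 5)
Your theta case is correct and does give the right constant: combining the shortest cycle $C$ with the BFS path $P$ yields a theta subgraph $G'$ with $l_{\abl}(G') = 2|E_{G'}| \le 6\log_{d-1} n + o_n(1)$, and the Fundamental Lemma finishes. But the collision case is a genuine gap, and it is not a finishing detail --- it is precisely where the constant $6$ must be earned. First, a collision during BFS from $v$ need not produce a cycle through $v$: the two tree paths from $v$ to the collision vertex share an initial segment, and if they diverge at some $y \ne v$ the support is a stick from $v$ to $y$ plus a cycle at $y$, which together with $C$ forms a barbell, not a figure-eight. Since the bar is counted four times in $l_{\abl}$, the naive bound is again about $8\log_{d-1}n$. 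Second and more seriously, the ``iterate and invoke Moore-type volume growth'' step is not an argument: you do not prevent the iteration from rediscovering $C$, you do not control the cumulative length across iterations, and you do not show that a cascade of vertex-disjoint cycles actually contradicts a volume count. You acknowledge this as the ``hard step,'' and the proposal stops before supplying it.

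The paper closes this case by a different route that never invokes the Fundamental Lemma. Choosing $h$ so that $d(d-1)^{h-1} \ge 2n+1$, the pigeonhole principle gives \emph{three} distinct non-backtracking walks $a,b,c$ of length $h \approx \log_{d-1}n$ from a fixed $v$ to a common vertex $u$ (three rather than two, so that when $u=v$ one still has a non-inverse pair). The explicit walk $\ell = aba^{-1}b^{-1}$ (if $u=v$, length $4h$) or $\ell = ab^{-1}ca^{-1}bc^{-1}$ (if $u\ne v$, length $6h$) is edge-neutral, and a free-group argument in $\pi_1(G,u)$ --- commutators, Lemma~\ref{lem:commute}, Lemma~\ref{lem:unequal_length}, and a careful prefix/suffix analysis of reductions --- shows $\ell$ does not reduce to the trivial walk. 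Its reduction is therefore a non-trivial edge-neutral closed non-backtracking walk of length at most $6h$, which gives the bound directly. This algebraic argument is exactly what replaces the missing analysis of your non-theta collisions; without something of comparable strength for the figure-eight and barbell cases, your approach does not close.
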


Our last theorem uses the \cite{LPS} graphs $X^{p,q}$ whose definition
we save for Section~\ref{sec:lps}.

\begin{thm}\label{thm:lps}
Let $p,q\equiv 1 \pmod 4$ be prime with $(q/p)=1$.  
The graph $X^{p,q}$ of \cite{LPS} of our Definition~\ref{defn:lps}
has degree $d=p+1$ and $n=q(q^2+1)$, and for fixed $p$ and large $q$
(i.e., fixed $d$ and large $n$) we have
$$
\abl(X^{p,q}) \le (16/3) \log_{d-1}n + o_n(1).
$$
\end{thm}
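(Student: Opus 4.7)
The strategy is to apply the Fundamental Lemma (\lemref{fundamental}): it suffices to exhibit a connected, leafless subgraph of $X^{p,q}$ of Euler characteristic $-1$ with abelian length at most $(16/3)\log_{d-1}n + o_n(1)$. I will target figure-eight subgraphs, for which the abelian length equals $2|E|$.

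Recall that the LPS graphs satisfy $\girth(X^{p,q}) = g := (4/3)\log_{d-1}n + o_n(1)$. Hence a figure-eight consisting of two edge-disjoint cycles, each of length equal to the girth $g$, meeting at a common vertex, has exactly $2g$ edges and abelian length $4g = (16/3)\log_{d-1}n + o_n(1)$. The task therefore reduces to exhibiting two such edge-disjoint girth cycles with a common vertex in $X^{p,q}$.

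Since $X^{p,q}$ is a Cayley graph of $\PSL_2(\FF_q)$, it is vertex-transitive, so we may take the common vertex to be the identity $e$. Girth cycles at $e$ correspond to reduced words of length $g$ in the $p+1$ LPS quaternion generators that evaluate to a scalar matrix in $\PSL_2(\FF_q)$, equivalently to integer quaternions of norm $p^g$ that reduce to a scalar modulo $q$. By Jacobi's four-squares formula together with equidistribution modulo $q$, the number of such elements tends to infinity with $q$, so many girth cycles are available at $e$.

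To extract two edge-disjoint girth cycles from this family, I would lift to the universal cover $T_d$, a $(p+1)$-regular tree on which the deck group $\Lambda = \pi_1(X^{p,q},e)$ acts freely. Each girth cycle at $e$ lifts to a length-$g$ geodesic from a fixed basepoint $\tilde e$ to a vertex $\gamma \tilde e$ with $\gamma \in \Lambda$ of minimal word length $g$. Two such lifts are edge-disjoint in $T_d$ precisely when they begin with different generators, and their projections to $X^{p,q}$ are edge-disjoint provided no deck element $\lambda \in \Lambda$ identifies an internal edge of one lift with an internal edge of the other. The \emph{main obstacle} is certifying this last property: a shared edge would correspond to a non-trivial $\lambda \in \Lambda$ of word length at most $2g$, and the set of such $\lambda$ is too large for naive pigeonholing over the available girth elements. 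One must instead exploit the explicit arithmetic of the LPS construction (developed in \secref{lps}) to select a concrete pair $(\gamma_1, \gamma_2)$ of length-$g$ elements whose lifted paths project edge-disjointly into $X^{p,q}$, thereby producing the required figure-eight.
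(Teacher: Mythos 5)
Your proposal diverges from the paper's argument and, as you yourself acknowledge, contains a genuine gap at the crucial step. You aim to produce a figure-eight subgraph consisting of two \emph{edge-disjoint} girth cycles through a common vertex and then invoke the Fundamental Lemma. But you never establish edge-disjointness, and your discussion of why it is hard (deck elements of word length up to $2g$ potentially identifying interior edges) is correct: naive pigeonholing over available girth elements does not rule this out, and you do not carry out the "explicit arithmetic" you gesture at. As written, the argument is incomplete.

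The paper avoids this difficulty entirely by not requiring edge-disjointness. It instead invokes Lemma~\ref{lem:scholium}: if a graph has three distinct closed non-backtracking walks $a,b,c$ of common length $l$ from a vertex to itself, then $\abl(G) \le 4l$. The proof (embedded in the $u=v$ case of Theorem~\ref{thm:moore}) takes two of the walks that are not mutual inverses and forms the commutator $ab a^{-1} b^{-1}$, which is automatically edge-neutral whatever edges $a$ and $b$ share; the free-group argument (Lemmas~\ref{lem:commute} and~\ref{lem:unequal_length}) then shows it does not reduce to the empty walk. The number-theoretic input is a lemma extending Biggs--Boshier: for $m \equiv 4 \pmod 8$ (so $m = 4, 12, 20$), the quantity $2mp^r - m^2 q^2$ is always "good" whenever positive, yielding three distinct elements of $\Lambda(2q)$ at depth $2r_0$ with $r_0 < 2\log_p q + O(1)$, hence three distinct closed non-backtracking walks of length $2r_0 \approx \tfrac{4}{3}\log_{d-1} n$ at the identity. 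Lemma~\ref{lem:scholium} then gives $\abl \le 8r_0 \approx \tfrac{16}{3}\log_{d-1}n$. The lesson is that the commutator route needs only \emph{distinctness} (plus one extra walk to dodge the mutual-inverse degeneracy), a much weaker and much more verifiable condition than the edge-disjointness your figure-eight approach demands.
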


\section{Proof of Theorem~\ref{thm:moore}}

In this section we prove Theorem~\ref{thm:moore}.
First we state a few well-known facts regarding free groups and graphs.

%Let $S$ be a set and $F(S)$ the free group generated by $S$.  Then
%every word (i.e., sequence of finite length) in the alphabet
%\begin{equation}\label{eq:Spm}
%S^{\pm} \eqdef \coprod_{s\in F(S)} \{s,s^{-1}\}
%\end{equation}
%can be viewed as an element in the free group $F(S)$.
%The contiguous appearance of a letter and its inverse in a word is
%called a {\em reversal}.
%Each word, $w$, in $S^{\pm}$ can be {\em reduced} by successively discarding
%its reversals; the word
%obtained is the {\em reduction} of $w$ and is independent of the choice of
%pairs to reduce; the reduction therefore contains no reversals.
%The elements in $F(S)$ can be identified with the reduced words over the
%alphabet $S^{\pm}$. Similarly, a walk can be \emph{reduced} by successively discarding
%instances of an edge immediately followed by its inverse. For a word $w$ we use the notation
%$\red(w)$ for the reduction of $w$. Note that $\red(w^{-1}) = \red(w)^{-1}$ for any walk $w$ since
%the portion of a walk that are removed by reduction remain the same when we 
%take the inverse of that walk.

%\begin{defn}
%With $S$ a set and $S^{\pm}$ as in \eqref{eq:Spm},
%a word in $S^\pm$ is said to be {\em balanced} if for each
%$s\in S$, the word contains the same number of appearances of $s$
%and $s^{-1}$.
%An element, $g$, of the free group $F(S)$ is said to be balanced if some
%(and hence any)
%word in $S^\pm$ that represents it is balanced (discarding a reversal does
%not change the property of being balanced).
%\end{defn}

\begin{lem}
Let $G$ be a finite graph.  Then the fundamental group, $\pi_1(G,u)$, 
of homotopy classes of closed walks in $G$ about $u$ for any $u\in V_G$
is
isomorphic a free group on a finite set of generators.
\end{lem}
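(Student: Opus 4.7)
The plan is to use a spanning tree argument, which is the standard combinatorial route. I may assume $G$ is connected, since $\pi_1(G,u)$ only depends on the connected component of $u$. Being finite and connected, $G$ admits a finite spanning tree $T$ with $|V_G|$ vertices and $|V_G|-1$ edges; I set $k = |E_G| - |V_G| + 1$, the number of edges in $E_G \setminus E_T$. For each such edge $e$, define a closed walk $g_e$ at $u$ by traversing the unique reduced walk in $T$ from $u$ to $t_G(e)$, then the directed edge $(e,+)$, then the unique reduced walk in $T$ from $h_G(e)$ back to $u$. Viewing $g_e$ as an element of $\pi_1(G,u)$, I claim the finite set $\{g_e\}_{e \in E_G \setminus E_T}$ is a free basis of $\pi_1(G,u)$.

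For the generation step, given any reduced closed walk $w$ at $u$, I locate its non-tree directed edges $(e_1,\epsilon_1),\ldots,(e_j,\epsilon_j)$ in the order they appear; the segments of $w$ between (and around) these steps lie entirely in $T$. Since any two walks in a tree with the same endpoints are homotopic (their reductions agree), each such segment is homotopic to the unique reduced $T$-walk between its endpoints. Making these replacements exhibits $w$ as homotopic to $g_{e_1}^{\epsilon_1} \cdots g_{e_j}^{\epsilon_j}$, proving that the $g_e$ generate.

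For freeness, I take a reduced word $g_{e_1}^{\epsilon_1} \cdots g_{e_j}^{\epsilon_j}$ with $j\ge 1$, expand it in $G$, and reduce. After reducing the $T$-segments at each junction (they concatenate to the unique reduced $T$-walk between the relevant endpoints), I obtain a walk of the form $\gamma_0\,(e_1,\epsilon_1)\,\gamma_1\,(e_2,\epsilon_2)\cdots (e_j,\epsilon_j)\,\gamma_j$ where each $\gamma_i$ is a reduced walk in $T$. No $(e_i,\epsilon_i)$ can participate in a reversal with an adjacent edge of $\gamma_{i-1}$ or $\gamma_i$, since those use tree edges while $e_i \notin E_T$; and if a $\gamma_i$ is empty, the reduced-word hypothesis rules out the only way that $(e_i,\epsilon_i)$ and $(e_{i+1},\epsilon_{i+1})$ could be mutual inverses. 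Hence the fully reduced walk still contains each $(e_i,\epsilon_i)$, so is nontrivial, and distinct reduced words yield distinct elements of $\pi_1(G,u)$.

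The main obstacle is making this non-cancellation argument airtight, because in principle a reduction could cascade: after cancelling a pair of tree edges inside some $\gamma_i$, new reversals might emerge. The clean way to handle this is to observe that, by the uniqueness of the reduction, it suffices to exhibit any non-backtracking walk homotopic to the expansion; the walk $\gamma_0\,(e_1,\epsilon_1)\,\gamma_1 \cdots (e_j,\epsilon_j)\,\gamma_j$ constructed above is already non-backtracking by the two observations just made. Alternatively, one can quote the standard topological fact that contracting the spanning tree $T$ in the geometric realization of $G$ yields a wedge of $k$ circles, whose fundamental group is free of rank $k$; this gives the conclusion at once and also identifies the rank as $1 - \chi(G)$.
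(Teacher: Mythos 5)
The paper states this lemma as a well-known fact with no proof attached, so there is no argument of the authors' to compare against. Your spanning-tree proof is the standard combinatorial argument and is correct: the generators $g_e$ are the right ones, the generation step (splicing in null-homotopic $u$-to-$u$ tree detours at each non-tree step) works, and the one delicate point for freeness — that cancellations in the expanded word might cascade — is resolved cleanly by exhibiting the walk $\gamma_0(e_1,\epsilon_1)\gamma_1\cdots(e_j,\epsilon_j)\gamma_j$ directly, checking it is non-backtracking (tree edges cannot cancel against non-tree edges, and adjacent $(e_i,\epsilon_i)$, $(e_{i+1},\epsilon_{i+1})$ cannot be inverses when $\gamma_i$ is empty because the word was reduced), and invoking uniqueness of reduction; for $j\ge 1$ this walk is nonempty, hence nontrivial. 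Your fallback topological route — collapsing $T$ in the geometric realization to get a wedge of $k=|E_G|-|V_G|+1$ circles — is equally valid, a bit quicker, and gives the rank as $1-\chi(G)$ for free.
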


The following fact is well known.  For ease of reading we provide a 
proof which is also found in texts on group theory such as \cite{lyndoncom}.

\begin{lem}\label{lem:commute}
Let $F$ be a free group and let $\alpha$ and $\beta$ be two elements of $F$ that commute with each other.
Then there exists an $\omega \in F$ such 
that $\alpha=\omega^m$ and $\beta=\omega^{m'}$ for $m, m' \in \Z$.
\end{lem}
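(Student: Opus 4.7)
The plan is to show that the subgroup $H = \langle \alpha, \beta \rangle \leq F$ generated by $\alpha$ and $\beta$ must be cyclic, at which point choosing any generator $\omega$ of $H$ immediately gives the conclusion, since $\alpha, \beta \in H = \langle \omega \rangle$ forces $\alpha = \omega^m$ and $\beta = \omega^{m'}$ for some integers $m, m'$.

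First I would dispense with the degenerate cases: if $\alpha$ is the identity, take $\omega = \beta$, $m=0$, $m'=1$, and symmetrically if $\beta$ is the identity. So I may assume from here on that both $\alpha$ and $\beta$ are non-trivial. The cyclicity of $H$ then rests on two facts. By the Nielsen--Schreier theorem, every subgroup of a free group is itself free, so $H$ is free. Since $\alpha\beta = \beta\alpha$, $H$ is also abelian. A free group that is abelian has rank at most $1$: if $H$ were free on two or more generators, those generators would satisfy no non-trivial relations and in particular would not commute. Therefore $H$ is infinite cyclic (or trivial), and we are done.

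The main obstacle is that invoking Nielsen--Schreier is somewhat heavy machinery for such a concrete statement, and the paper cites Lyndon--Schupp, which typically prefers a hands-on argument using reduced words. An alternative route I would take, staying closer to the spirit of combinatorial group theory used elsewhere in the paper, is induction on $|\alpha| + |\beta|$ in the free group. After reducing to the case where both $\alpha$ and $\beta$ are cyclically reduced of shortest length in their conjugacy classes (conjugating both simultaneously preserves commutation), one analyzes the cancellation pattern in the equality $\alpha\beta = \beta\alpha$. Assuming $|\alpha| \geq |\beta|$, the commutation forces $\alpha$ to begin and end with $\beta$ (or related overlap), so that $\alpha = \beta\gamma$ for some shorter $\gamma$ commuting with $\beta$; the inductive hypothesis applied to $(\gamma, \beta)$ produces the common root $\omega$, from which $\alpha = \beta \omega^{m''} = \omega^{m' + m''}$ follows.

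The delicate step in the elementary approach will be the case analysis of cancellation in $\alpha\beta$ versus $\beta\alpha$ — verifying that one really can extract a shorter commuting pair at each inductive step, and handling boundary cases where one word is contained inside the other or where they share long initial or terminal segments. In either approach, the algebraic content is the same: commuting elements of a free group lie in a common cyclic subgroup.
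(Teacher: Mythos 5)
Your primary argument — invoke Nielsen--Schreier to get that $H=\langle\alpha,\beta\rangle$ is free, observe it is abelian, and conclude it has rank at most one so is cyclic — is exactly the paper's proof. The alternative cancellation-based induction you sketch is a genuinely different (and more elementary) route, but since you lead with and fully justify the same Nielsen--Schreier argument the paper uses, the proposal matches the paper's approach.
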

\begin{proof}
By the Nielsen-Schreier theorem,
the subgroup that $\alpha,\beta$ generate is a free group.
And yet, any two elements of this subgroup commute, and hence this subgroup
cannot be free and rank more than one.  Hence this subgroup is generated
by some $\omega\in F$, and the conclusion follows.
\end{proof}

For any closed walk $w$, we use the notation $[w]$ for the homotopy class in $\pi_1(G,u)$  that has
$w$ as a representative.
Suppose $a$ and $b$ are closed walks and [a] and [b] commute in $\pi_1(G,u)$. Note that equality
in the previous lemma is with regards to $\pi_1(G,u)$, meaning an equality of homotopy classes but not 
necessarily walks. 
Reducing all the elements of a homotopy class gives one unique walk though.
So since $[a]$ and $[b]$ commute, we have $\red(a)=\red(w^m)$ and $\red(b)=\red(w^{m'})$ for 
$m, m' \in \Z$ and $w$ a closed walk. These equalities may not exist without the reductions.  We may assume $w$
is reduced, since $\red(w^m) = \red( \red(w)^m).$

The following lemma is also well known and easy.
\begin{lem}\label{lem:unequal_length}
Let $w$ be a closed nontrivial non-backtracking walk in a graph.
Then if for integers $m,m'$ we have that the length of $\red(w^m)$ equals that
of $\red(w^{m'})$, then $m=\pm m'$. 
\end{lem}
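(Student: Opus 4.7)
The plan is to reduce $w$ to its ``cyclically reduced'' form and compute $\red(w^m)$ explicitly as a function of $m$. Write $w = u_1 u_2 \cdots u_\ell$ with $\ell = l(w) \ge 1$, and let $k \ge 0$ be the largest integer such that $u_{\ell - i + 1} = u_i^{-1}$ for $i = 1, \ldots, k$. Put $a = u_1 \cdots u_k$ and $b = u_{k+1} \cdots u_{\ell - k}$, so $w = a b a^{-1}$. First I would verify that $2k < \ell$, equivalently $l(b) \ge 1$: otherwise $u_{k+1} = u_k^{-1}$ and $w$ would backtrack at that position. Next I would check that $b$ is closed (its endpoints coincide with the common endpoint of the $a$ and $a^{-1}$ parts), non-backtracking (inherited from $w$), and strongly closed (otherwise the first and last directed edges of $b$ would be inverses and could be absorbed into $a$, contradicting the maximality of $k$).

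Next I would expand, for $m \ge 1$,
$$
w^m = (aba^{-1})(aba^{-1}) \cdots (aba^{-1}),
$$
and observe that each of the $m - 1$ internal occurrences of $a^{-1} a$ reduces to the empty walk, leaving $a b^m a^{-1}$. The single genuine technical point is to show that $a b^m a^{-1}$ is itself non-backtracking, so that $\red(w^m) = a b^m a^{-1}$. This rests on three observations: strong closure of $b$ makes the interior junctions of $b^m$ reversal-free; non-backtracking of $w$ at position $k$ makes the junction $a \cdot b$ reversal-free; and non-backtracking at position $\ell - k$ makes the junction $b \cdot a^{-1}$ reversal-free. Together these give $l(\red(w^m)) = 2k + m(\ell - 2k)$.

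For $m < 0$ I would apply the same reasoning to $w^{-1} = a b^{-1} a^{-1}$ (noting $b^{-1}$ is again strongly closed, non-backtracking) to obtain $l(\red(w^m)) = 2k + |m|(\ell - 2k)$, while trivially $l(\red(w^0)) = 0$. Equating $l(\red(w^m)) = l(\red(w^{m'}))$ and using $\ell - 2k \ge 1 > 0$ then forces $|m| = |m'|$, i.e., $m = \pm m'$; the mixed case where exactly one of $m, m'$ equals $0$ is ruled out since $2k + |m|(\ell - 2k) = 0$ forces $m = 0$. The main obstacle, as indicated, is the careful verification that $a b^m a^{-1}$ is already reduced; once this is in hand the length formula and the conclusion $m = \pm m'$ are immediate.
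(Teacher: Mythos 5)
Your proof is correct and follows essentially the same route as the paper: decompose $w$ maximally as $w = a b a^{-1}$ (the paper writes $y x y^{-1}$), show the middle part $b$ is nonempty and strongly closed, and deduce $\red(w^m) = a b^m a^{-1}$ so that $l(\red(w^m)) = 2|a| + |m|\,|b|$, whence $|m|=|m'|$. The paper states the decomposition and length formula in one sentence and declares the verification "easy to check"; you supply the missing checks (that $|b|\ge 1$, that $b$ is strongly closed, and that the three types of junctions in $a b^m a^{-1}$ are reversal-free), all of which are carried out correctly.
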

\begin{proof}
Let $y$ be a maximal length walk such that $w=yxy^{-1}$ for some walk $x$.  Then it
is easy to check that for $m\ne 0$
the length of $\red(w^m)$ with respect to $S$ is precisely
$$
2 |y| + |m| |x|.
$$
\end{proof}
%\begin{lem}\label{lem:unequal_length}
%Let $F$ be a free group with respect to a set of generators, $S$,
%and $w\in F$ with $w\ne 1$.
%Then if for integers $m,m'$ we have that the length of $w^m$ with respect
%to $S$ equals that
%of $w^{m'}$, then $m=\pm m'$.
%\end{lem}
%\begin{proof}
%Let $w=uw'u^{-1}$ as a word in $S$, with $u$ of maximum length.  Then it
%is easy to check that for $m\ne 0$
%the length of $w^m$ with respect to $S$ is precisely
%$$
%2 {\rm length}_S(u) + |m| {\rm length}_S(w).
%$$
%\end{proof}

\begin{proof}[Proof of Theorem~\ref{thm:moore}]
Fix any vertex, $v\in V_G$.
Then for any integer $h\ge 1$, there are $d(d-1)^{h-1}$ non-backtracking
walks of length $h$ from $v$.
So let
$h$ be the smallest integer for which
$$
d(d-1)^{h-1} \ge 2n+1;
$$
then, by the pigeon hole principle,
there are three distinct non-backtracking walks, $a,b,c$, in $G$ beginning
in $v$ and terminating in the same vertex $u$ (which may or may not
equal $v$).  We remark that
$$
h  = \log_{d-1}n + o_n(1).
$$
Hence, to prove the theorem it suffices to show that 
we have
\begin{equation}\label{eq:bound_ab_h}
\abl(G) \le 6h .
\end{equation}

%%%%
%%%% November 11, 2015: JF: 
%%%% the letter d below changed to \newd since d above is used to refer
%%%% to the degree of the graph.  Substituting \ell for d seems
%%%% reasonable...
%%%%
\newcommand{\newd}{\ell}

The rough idea is simple.
We break the analysis into two cases: $u=v$ and $u\ne v$.
In either case we define new walk, $\newd$, based on $a,b,c$
such that (1) the length of
$\newd$ is $4h$ or $6h$ (in the respective two cases), 
(2) each edge of $E_G$ appears a net $0$ times in $\newd$, but (3) $\newd$ is not necessarily non-backtracking.
The main work is to show that $\newd$ does not reduce to the empty word.
Discarding a consecutive pair of an (unoriented) edge of $\newd$ and its
opposite retains the property that each edge is traversed the same 
number of times in each direction.
Hence if $\newd$ is reduced to a non-empty, non-backtracking walk, $\newd'$, then
$\newd'$ is edge neutral, and so
$$
\abl(G)\le 6h.
$$

Let us describe $\newd$ as above.
If $u=v$, then at least two of $a,b,c$ are not inverses of each other;
if $a,b$ are such walks, then we set $\newd=aba^{-1}b^{-1}$. %where $[\ ,\ ]$ denotes the commutator. 
This 
walk is of length $4h$.
If $u\ne v$, then we set $\newd$ to $\newd=ab^{-1}ca^{-1}bc^{-1}$, which is s walk
of length $6h$. For any $e \in E_G$, if $e$ appears a net $k$ times in a walk 
$w$ then $e$ appears a net $-k$ times in the walk $w^{-1}$. Thus $\newd$ in either case is
edge neutral. 

  It remains to show that $\newd$ reduces to a 
non-empty non-backtracking walk.

The case $u=v$ and $\newd=aba^{-1}b^{-1}$ where $a$ and $b$ (are distinct and)
are not inverses of each other is relatively easy.
If $\newd$ reduces to the empty word $e$ and if $\alpha = [a]$, $\beta = [b]$ and $\delta = [\newd]$ then
$\delta= [e] = [\alpha, \beta]_\mathrm{comm}$ where $[\ ,\ ]_\mathrm{comm}$ denotes the commutator. 
So
$\alpha$ and $\beta$ commute as elements of $\pi_1(G,u)$.
Hence, by Lemma~\ref{lem:commute},
and since $\pi_1(G,u)$ is a free group,
it follows that $a=\red(w^m)$ and $b=\red(w^{m'})$ for some closed walk $w$ with $w\ne 1$;
but then by Lemma~\ref{lem:unequal_length}, since $a$ and $b$ have the
same length, we have $m=\pm m'$, which contradicts the fact that $a$ and 
$b$ are distinct and not inverses of each other.

Now suppose $u \neq v$.  
We remark that as elements of $\pi_1(G,u)$ we have
$$
\newd=ab^{-1}cb^{-1}ba^{-1}bc^{-1}= [ab^{-1}, cb^{-1}],
$$
where we use $[f_1,f_2]$ to refer to the walk $f_1f_2f_1^{-1}f_2^{-1}$
for any $f_1$ and $f_2$ that are closed walks about the same vertex.
Let $\delta = [\newd]$, $\alpha = [ab^{-1}]$ and $\gamma = [cb^{-1}]$ and let $e$ be the trivial walk.
So if $\newd\in [e]$, we have that $1 = \delta = [\alpha, \gamma]_\mathrm{comm}$ and so
 $[ab^{-1}]$ and $[cb^{-1}]$ commute in the fundamental group.
Hence $\red(ab^{-1})=\red(w^m)$ and $\red(cb^{-1})=\red(w^{m'})$ for some closed non-backtracking
walk $w$.
Let $w= yx y^{-1}$ with
$x$ a closed non-backtracking walk and $y$ a walk of maximal length; 
then (1) $|x|>0$ as $|x|=0$ would imply
$a=b$, (2) the first and last
edges of $x$ are not inverses of each other, and (3)
$\red(w^m)=yx^m y^{-1}$.

Next we want to make some remarks on $ab^{-1}$ and $w^m$ based on the 
fact that
$\red(ab^{-1})=\red(w^m)$; we will later repeat similar remarks for $cb^{-1}$ and $w^{m'}$
based on the fact that $\red(cb^{-1})=\red(w^{m'}$).
Note 
$$a=\red(ab^{-1}b) = \red( \red(ab^{-1})b) = \red(\red(w^m)b) = \red(w^mb)$$
as reductions are independent of the order in which we reduce portions of the word.

Let $p$ be the maximal prefix of $b$ whose inverse is a suffix of
$\red(w^m)= yx^m y^{-1}$ (a priori $p$
could be as long as the shorter of $yxy^{-1}$ and $b$); since
$$
a=\red(w^m b),
$$
we have
$$
|a|= 2|y|+|m|\,| x | +|b|- 2|p|.
$$
But by assumption $|a|=|b|$, and hence
$$
2|y|+|m|\,| x|=2|p|.
$$
It follows that
\begin{equation}\label{eq:Plength}
|p| = |y| + |m|\,| x |/2 > |y|,
\end{equation}
since $|x|> 0$ and $m\ne 0$.
Since $|p|>|y|$ and $p$ is a suffix of $w^m=yx^m y^{-1}$, 
it follows that $p$ begins with $y$ and
contains at least one more edge; hence
$p=y x_1$ where $|x_1|>0$ and the first edge of $x_1$ is the inverse of the
last edge of $x^m$.

However, since
$\red(cb^{-1})=\red(w^{m'})$, the very same arguments show that if $p'$ is defined
analogously, i.e., as the 
maximal prefix of $b$ that is a suffix of $\red(w^{m'})$, then
$p'=y x_1'$ where $|x_1'|>0$ and the first letter of $x_1'$ is the
inverse of the last letter of $x^{m'}$.
But since both $p$ and $p'$ are both prefixes of $b$, we must have that
$m$ and $m'$ have the same sign: for if, say, $m>0$ and $m'<0$, then
the last edge of $x^m$ or it's orientation is different than the last edge of $x^{m'}$
by the maximality of $y$ in the equation $w=yxy^{-1}$.

Hence
$$
a = \red(w^m b), \quad c=\red(w^{m'}b),
$$
$a,b,c$ have the same length, and without loss of generality
we may assume $m>m'>0$.
But then \eqref{eq:Plength} and its analogue for $cb^{-1}=w^{m'}$ show that
$$
m = (|p|-|y|)/| x |, \quad\mbox{and}\quad
m' = (|p'|-|y|)/| x |.
$$
It follows that $|p|>|p'|$.  Since both $p$ and $p'$ are prefixes of
$b$, we have that $p'$ is a prefix of $p$.  But $p'$ is the maximal
suffix of $w^{m'}$ that is also
 a prefix of $b$; since $w^{m'}$ is also a suffix of  $w^{m}$  and $w^m$ is a prefix of $b$ it
follows that $p'=w^{m'}$ (otherwise $p$ could not be larger than $p'$).
But in this case
$$
|c| = |b|-|p'| < |b|,
$$
which contradicts that fact that $|c|=|b|$ from our choice of $a$, $b$ and $c$.
\end{proof}

\begin{lem}\label{lem:scholium}
If a graph $G$ has three distinct closed, non-backtracking walks from a vertex to itself and each walk 
has length $l$ then $\abl(G) \leq 4l.$
\end{lem}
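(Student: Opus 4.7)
The statement is essentially the $u = v$ case already handled inside the proof of Theorem~\ref{thm:moore}, so the plan is to isolate that argument. The plan is to take the three distinct closed non-backtracking walks $a,b,c$ of length $l$ based at some vertex $v$, select two of them that are not mutual inverses, form a commutator walk, and show it reduces to a non-empty edge-neutral walk of length at most $4l$.

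First I would observe that among the three distinct walks $a,b,c$, at least two are not inverses of each other. The reason is that for a closed non-backtracking walk $w$ one never has $w = w^{-1}$ (a quick palindrome check rules this out: an even-length palindrome forces a reversal at the midpoint, and an odd-length palindrome forces a middle edge equal to its own inverse). Hence the relation ``is the inverse of'' pairs distinct walks, so among three distinct walks at most one pair can consist of mutual inverses; thus some pair, relabel them $a$ and $b$, is distinct and not a pair of inverses.

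Next I would define $\ell = aba^{-1}b^{-1}$, which is a closed walk at $v$ of length $4l$. By construction every directed edge $(e,+)$ appears in $\ell$ exactly as often as $(e,-)$, so $\ell$ is edge neutral; this property is preserved under discarding a reversal, hence under reduction. So if I can show that $\red(\ell)$ is non-empty, then $\red(\ell)$ is a non-trivial closed non-backtracking edge-neutral walk of length at most $4l$, giving $\abl(G) \le 4l$.

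The main (and only) obstacle is showing $\red(\ell)$ is non-empty, but this is exactly the step already executed in the $u=v$ part of the proof of Theorem~\ref{thm:moore}, so I would copy that argument verbatim: if $\red(\ell)$ were empty then $[a]$ and $[b]$ would commute in $\pi_1(G,v)$; by Lemma~\ref{lem:commute} applied in the free group $\pi_1(G,v)$ there would be a closed walk $w$ and integers $m,m'$ with $\red(a) = \red(w^m)$ and $\red(b) = \red(w^{m'})$; since $a$ and $b$ are already non-backtracking and of equal length $l$, Lemma~\ref{lem:unequal_length} would force $m = \pm m'$, so $a = b$ or $a = b^{-1}$, contradicting the choice of $a,b$. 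This completes the plan.
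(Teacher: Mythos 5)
Your proposal is correct and is the same argument the paper gives: the paper's proof of this lemma simply points back to the $u=v$ case in the proof of Theorem~\ref{thm:moore}, and you have reproduced that case (choose two of the three walks that are not mutual inverses, form the commutator walk $aba^{-1}b^{-1}$, observe it is edge neutral, and show its reduction is non-empty via Lemma~\ref{lem:commute} and Lemma~\ref{lem:unequal_length}). The only addition is that you spell out why at least two of the three walks fail to be mutual inverses, a detail the paper asserts without comment.
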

\begin{proof}
This was shown to be true in our proof of Theorem~\ref{thm:moore}. See the discussion for the case $u=v$.
\end{proof}

\section{Proofs of the Fundamental Lemma and Theorem~\ref{thm:girths}}

% Before we prove the Fundamental Lemma, we will need the following lemma
% from graph theory.

% \begin{lem}\label{lem:EulerCharMinusTwo} 
% Let $G$ be a graph such that (1) $G$
% is connected; (2) each vertex 
% of $G$ has degree at least two; (3) $G$ has Euler
% characteristic at most $-2$.  Then either (1) $G$ contains an edge whose
% removal disconnects the graph, or (2) $G$ contains a figure-eight or theta
% graph as a subgraph.  
% \end{lem} 
% \begin{proof} 
% Let $e$ be any edge in $G$, and extend $e$ to be a beaded path, $p$, that
% begins and terminates at vertices $u$ and $v$ of degree at least three: i.e.,
% the intermediate vertices of $p$ are of degree two, and by ``extending $e$''
% we mean that each time we enter in a vertex of degree two, there is a 
% unique way of extending the path; if we never reach a vertex of degree at
% least three then $G$ consists of a single cycle, which is impossible.
% \end{proof}

\begin{proof}[Proof of Lemma \ref{lem:fundamental}]
Let $w$ be an edge neutral, 
closed, non-backtracking walk of minimal (positive) length,
and let $B$ be the subgraph of $G$ of vertices and edges that occur
in $w$.  Then $B$ is a connected subgraph of $G$ such that each vertex
has degree at least two.  
If every vertex in $B$ has degree exactly two, then $B$ would be a 
cycle, which is impossible any non-backtracking walk in a cycle traverses
edges in at most one direction.
Hence at least some vertex of $B$ is strictly greater than two;
hence the formula
$$
\chi(B) = \sum_{v\in V_{B}} \bigl( 2 - \deg(v) \bigr)
$$
shows that $\chi(B)\ge -1$.

Now $w$ traverses each edge of $B$ at least once in each direction, and
hence
$$
l(w) \ge 2\, |E_B|.
$$

Recall that the abelian length of a figure-eight, theta or barbell graph
was defined as its abelian girth.  In particular, if $B$ is such a graph
we are done.  Moreover, in general it suffices to show that there is another
walk $w'$ which is edge neutral and closed non-backtracking for which
$l(w)\ge l(w')$ and which is supported on a subgraph $B'$ of $B$ of
Euler characteristic $-1$.  We therefore assume by contradiction that
no such $w'$ exists.

It follows immediately that $B$ cannot contain a theta or figure-eight
graph $B'$; indeed 
$$
l(w) \ge 2|E_B|;
$$
while for such $B'$, $Abl(B') = 2|E_{B'}|$ and $B'$ would be a proper subgraph
of $B$, hence have fewer edges.  This would contradict the minimality of $B$.

Second, we claim that $B$ cannot be $1$-connected (have an edge $e$ whose
removal would disconnected $B$).  Indeed, if there is such an edge, we can
extend in both directions to a path until we encounter vertices $v_1,v_2$
of degree greater than $2$.  Call this path $p$.

Let $B_1, B_2$ be the components we get by removing $p$ from $B$ (but keeping
$v_1 \in B_1$ and $v_2 \in B_2$).  Then any non-backtracking closed walk in $B$
must follow a traversal of $p$ from $B_1$ to $B_2$ by a non-trivial closed
walk in $B_2$, a traversal of $p$, and then a non-trivial walk in $B_1$.
In particular, each of $B_1,B_2$ must contain a cycle and $w$ must traverse
$p$ at least $4$ times, between successive visit to $B_1,B_2,B_1,B_2$.

Each visit to $B_i$ is a closed non-backtracking walk in $B_i$ beginning in
$v_i$. Accordingly let $c_i$ be the shortest such walk.  Then we may assume
$w$ is the sequence $c_1,p,c_2,p^{-1},c_1^{-1},p,c_2^{-1},p$ (this is no
longer than $w$), and in particular that $B_i$ is the support of a shortest
closed cycle.  But then $B_i$ is unicyclic, and since it has no leaves
it must  be a "lollipop": a simple cycle connected to $v_i$ by a "stick"
(simple path).  This which makes $B$ into a barbell graph and $w$ into its
shortest balanced closed non-backtracking walk.

Third, we claim that either the first or second claim must be
contradicted; this will complete the proof of the lemma.
Indeed, $B$ is not a tree, and therefore must contain a cycle, $C$.
The cycle must contain a vertex, $v$, of degree at least three, or else
$B$ would consist entirely of $C$, which would then have Euler characteristic
zero.
Let $e$ be any edge not in $C$ that is incident upon $v$, and let
$u$ be the other endpoint of $e$.
Since removing $e$ does not disconnect $B$, there must be a walk from
$u$ to a vertex of $C$ that does not contain $e$; let $p$ be such a
walk of minimal length.  Then $p$ begins in $u$ and is a beaded path
to a vertex of $c$.  But then the path $e$ followed by $p$ is
disjoint from $C$ and joins $v$ to another vertex of $C$, which yields
either a figure-eight graph (if $p$ terminates in $v$) or a
theta graph (if $p$ terminates in a vertex of $C$ that is not $v$).

Hence our first claim, that $B$ does not contain a theta or figure-eight
subgraph, is violated.
\end{proof}

We now prove Theorem~\ref{thm:girths} as a consequence of the 
Fundamental Lemma.

\begin{proof}[Proof of Theorem~\ref{thm:girths}.]
Let $G'\in \mathcal{C}$ be the subgraph of $G$ of minimal abelian length.
If $G'$ is homeomorphic to the barbell graph, then $\abl(G) \geq 4+4\gir(G)$
since each of the two simple cycles in $G'$ is at least as long as the girth.
Similarly, $\abl(G) \geq 4\gir(G)$ if $G'$ is homeomorphic to a figure-eight graph.  In the case of a theta graph, we have
two vertices $u$ and $v$ in $G'$ of degree $3$ and three simple paths from $u$
to $v$. Let the lengths of the simple path be $l$, $m$ and $n$.
Then $l+m,m+n,n+l \geq \gir(G)$.  Adding these inequalities gives
% $3\gir(G) \leq 2(l+m+n) = 2\#E_{G'}$.
$$3\gir(G) \leq 2(l+m+n) = 2|E_{G'}|=\abl(G)$$.
\end{proof}

% \input{secabl}

% \input{secsheaf}	% old section with definitions of sheaves, etc.
% \input{secmetwist} 	% section for main results about m.e. vs. first
			%   twisted Betti number
% \input{secsubquot}    % remarks about subquotient sheaves
%\input{sectrick}	% the twist trick and related results
% \input{seccommon}	% common reductions to both proofs
% \input{secminimal}      % supercore proof loopification of stable sheaves
%\input{secinduct}      % inductive proof of loopification of stable sheaves
%\input{secmono}        % monotone sheaves
%\input{secvector}	% discussion of vector bundles
% \input{secfinish}	% very short section to conclude all the main theorems
\section{Abelian Girth of the LPS Expanders}
\label{sec:lps}

% The following theorem gives the 
% best known upper bound for the girth of a regular graph.
% \begin{thm}
% If $G$ is a $d$-regular graph on $n$ vertices then
 % $$2 \log_{d-1} n \cdot (1+o(1)) \geq g(n, d)$$
 % \end{thm}
% Any multiplicative improvement on Theorem \ref{thm:moore} would also improve the bound for the girth due to Corollary \ref{cor:girths}.

The Ramanujan graphs of Lubotzky, Phillips and Sarnak \cite{LPS} are an
infinite family of graphs with the largest known asymptotic girth.  As
mentioned earlier, these 
are graphs $X^{p,q}$, for primes $p,q\equiv 1 \pmod{4}$,
that are $d=p+1$ regular on $n=q(q^2+1)$ vertices for which 
$$
{\rm girth}(X^{p,q}) = (4/3) \log_{d-1} n + o_n(1).
$$
 This immediately leads to the bound on abelian girth 
$$\abl(X^{p,q}) \geq 4 \log_{d-1} n + o_n(1).$$
We will describe these Ramanujan graphs and obtain an upper bound on their
abelian girth in order to show that their abelian girth isn't so large that it
would be impossible to improve Theorem \ref{thm:moore}.

\begin{defn}
\label{defn:lps}
Let $p$ and $q$ be unequal primes congruent to $1$ mod $4$ with $(p/q) = -1$ and $q> \sqrt{p}.$ The integral quaternions, denoted $H(\Z)$, are given by
$$H(\Z) = \{\alpha = \alpha_0+\alpha_1 \mathbf{i} + \alpha_2 \jq +\alpha_3 \kq | a_j \in \Z\}.$$
We denote the conjugate of $\alpha$ by $\overline{\alpha}$ and define 
$N(\alpha) = \alpha \overline{\alpha}.$ Let $S$ be the set of all $\alpha$ in $H(\Z)$ satisfying
$N(\alpha)=p, \alpha \equiv 1 (\mod 2)$ and $\alpha_0 \geq 0$. It can be shown that $|S| = p+1.$ Define 
$\Lambda'(2)$ as the set of $\alpha \in H(\Z)$ such that $N(\alpha)=p^v$ for some
non-negative integer $v$ and $\alpha \equiv 1 (\mod 2).$ Define $\Lambda(2)$ as equivalence classes
 that identify  $\alpha$ and $\beta$ $\in H(\Z)$ if $\pm p^v_1 \alpha = p^v_2 \beta$ for some
$v_1, v_2 \in \Z.$ It is known that the Cayley graph of $\Lambda(2)$ is the $(p+1)$-regular tree.
Define $\Lambda(2q)$ by
$$ \Lambda(2q) = \{[\alpha ] \in \Lambda(2)| 2q \textrm{ divides } \alpha_j, j = 1,2,3\}.$$
This is a normal subgroup of $\Lambda(2)$ and $X^{p, q}$, the \emph{LPS graph}, is defined as the Cayley graph of
$\Lambda(2)/\Lambda(2q)$ with generators $S/\Lambda(2q)$. This graph is known to be a $(p+1)$-regular bipartite graph on $q(q^2+1)$ vertices.
\end{defn}

Since $X^{p, q}$ is a Cayley graph it is vertex transitive which allows us to assume its smallest cycle goes from the
identity element of $\Lambda(2)$ to some nontrivial element of $\Lambda(2q)$ along the infinite Cayley graph of
$\Lambda(2)$. We now define a vertex' depth as its distance from the identity in the Cayley graph of $\Lambda(2)$. Biggs and Boshier \cite{BB} show that if $[b] \in \Lambda(2)$ is at depth
$2r$ and $r>0$ then there is some $b$ in the equivalence class $[b]$ such that
$$b_0 = \pm(p^r - mq^2)$$
with $m>0$ and even.

\begin{defn}
Positive integers are called \emph{good} if they are not of the form $4^\alpha(8\beta+7)$ for integers $\alpha, \beta \geq 0$. \end{defn}
The following is Lemma 2 of \cite{BB}.
\begin{lem}
There exists a $[b] \in \Lambda(2q)$ at level $2r$ with $b_0 = p^r - mq^2$ with $m>0$ and $b_0$ positive
if and only if $2mp^r-m^2q^2$ is good.
\end{lem}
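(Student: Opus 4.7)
The plan is to apply the classical Legendre--Gauss three-squares theorem: a nonnegative integer $N$ is a sum of three integer squares if and only if $N$ is not of the form $4^\alpha(8\beta+7)$. The lemma is essentially the translation of this criterion through the quaternion arithmetic governing $\Lambda(2)$.

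For the forward direction, I would suppose $[b] \in \Lambda(2q)$ lies at level $2r$ and choose a representative $b = b_0 + b_1 \iq + b_2 \jq + b_3 \kq$ with $b_0 = p^r - mq^2 > 0$ and $m > 0$ even, as the previously cited Biggs--Boshier remark permits. Being at level $2r$ means $N(b) = p^{2r}$, and expanding gives
$$b_1^2 + b_2^2 + b_3^2 = p^{2r} - (p^r - mq^2)^2 = q^2 (2mp^r - m^2 q^2).$$
Writing $b_j = 2q c_j$ (using $2q \mid b_j$) and dividing by $4q^2$ yields $c_1^2 + c_2^2 + c_3^2 = (2mp^r - m^2 q^2)/4$, which is a nonnegative integer since $m$ is even. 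Thus $(2mp^r - m^2 q^2)/4$ is a sum of three squares, and a short parity argument (noting $4N$ is of the form $4^\alpha(8\beta+7)$ iff $N$ is of the form $4^{\alpha-1}(8\beta+7)$, since $\alpha = 0$ is impossible for the even integer $4N$) converts this into goodness of $2mp^r - m^2 q^2$ itself.

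Conversely, assuming $2mp^r - m^2 q^2$ is good, Legendre's theorem furnishes integers $c_1, c_2, c_3$ with $c_1^2 + c_2^2 + c_3^2 = (2mp^r - m^2 q^2)/4$. I would then set $b_0 = p^r - mq^2$ and $b_j = 2qc_j$ for $j = 1, 2, 3$. Direct algebra gives $N(b) = p^{2r}$; by construction $2q \mid b_j$ for $j \geq 1$; and $b \equiv 1 \pmod{2}$ because $p^r$ is odd while $mq^2$ and each $2q c_j$ are even. Hence $[b] \in \Lambda(2q)$ with representative of norm $p^{2r}$.

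The main obstacle will be verifying that $[b]$ sits at level \emph{exactly} $2r$, and not some strictly smaller even number. This amounts to ruling out that $p$ divides all four coordinates of $b$ simultaneously, since otherwise we could cancel a factor of $p$ in $H(\Z)$ and obtain a representative at a lower level. I would handle this either by selecting a three-square decomposition in which the $c_j$ are not all divisible by $p$, or by an inductive descent on $(m,r)$: whenever a common factor of $p$ appears, reduce to a pair $(m', r')$ with smaller $r$ and verify that the "good" condition transports to the smaller instance. The hypotheses $p \equiv 1 \pmod 4$ and $(p/q) = -1$ are likely to enter at this step via local constraints at $p$, and producing a fully clean descent argument is where I expect to spend the bulk of the work.
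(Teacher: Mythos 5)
The paper does not prove this lemma; it cites it verbatim as Lemma 2 of Biggs and Boshier, so there is no in-paper argument to compare yours against. That said, your route via the Legendre--Gauss three-squares theorem is exactly the right one and is the approach used in the Biggs--Boshier source. The forward direction is complete and correct: expanding $N(b)=p^{2r}$ with $b_0=p^r-mq^2$ and $b_j=2qc_j$ gives $c_1^2+c_2^2+c_3^2=(2mp^r-m^2q^2)/4$, and your observation that $N$ is good iff $4N$ is good closes the loop. (One small simplification: you do not need to import the evenness of $m$ from the Biggs--Boshier remark; $b\equiv 1\pmod 2$ forces $b_0$ odd, and since $p^r$ and $q^2$ are odd this forces $m$ even on its own.)

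You have also correctly located the only real issue in the converse: the constructed $b$ has $N(b)=p^{2r}$ and lies in $\Lambda(2q)$, but one must rule out that $b$ is a $p$-th multiple of a shorter quaternion. There is a short observation you did not make that resolves the bulk of it: if $p\nmid m$, then $b_0=p^r-mq^2$ is coprime to $p$, so $p$ cannot divide the quaternion $b$, and the level is exactly $2r$ with no further work. This already covers every invocation of the lemma in this paper, where $m\in\{4,12,20\}$ and $p$ is large. When $p\mid m$ the concern is genuine: you must exhibit a three-square representation of $(2mp^r-m^2q^2)/4$ in which the $c_j$ are not all divisible by $p$, and your proposed descent on $(m,r)$ (which is sound in spirit, since $p^2N$ is good iff $N$ is) still leaves you needing, at the base of the descent, a representation that is not $p$ times a smaller one. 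That step ultimately requires information about representations by $x^2+y^2+z^2$ beyond the bare three-squares existence theorem. So as written, the proposal is an honest and well-aimed sketch, complete in the forward direction and in the converse when $p\nmid m$, but not yet a full proof of the converse when $p\mid m$.
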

 In the paragraph before this Lemma, Biggs and Boshier prove at least one of
 the integers $2mp^r-m^2q^2$
is good for the cases $m=2$ and $m=4$ so long as they are both positive.

The following lemma is original.
\begin{lem}
If $m=4 +8c$ for nonnegative integers $c$, then $2mp^r-m^2q^2$ is good if it is positive.
\end{lem}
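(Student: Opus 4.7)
The plan is to analyze the $2$-adic valuation $v_2$ of $N := 2mp^r - m^2 q^2$ and observe that numbers of the form $4^{\alpha}(8\beta+7)$ have even $v_2$, while $N$ will have $v_2(N) = 3$.

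First I would write $m = 4 + 8c = 4(1+2c)$, where $1+2c$ is odd. From this, $2m = 8(1+2c)$ has $v_2(2m) = 3$ (since $1+2c$ is odd), and $m^2 = 16(1+2c)^2$ has $v_2(m^2) = 4$. Since $p, q$ are odd primes, $p^r$ and $q^2$ are odd, so $v_2(2mp^r) = 3$ and $v_2(m^2 q^2) = 4$. As the two summands have different $2$-adic valuations, the valuation of their difference equals the smaller one:
\[
v_2(N) = v_2(2mp^r - m^2q^2) = 3.
\]
Equivalently, $N = 8(1+2c)\bigl[p^r - 2(1+2c)q^2\bigr]$, and the bracketed factor is odd since $p^r$ is odd and $2(1+2c)q^2$ is even, confirming $v_2(N) = 3$.

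Next I would observe that any positive integer of the form $4^{\alpha}(8\beta+7)$ has $2$-adic valuation exactly $2\alpha$, which is even, because $8\beta+7$ is odd. Since $v_2(N) = 3$ is odd, $N$ cannot be written in the form $4^{\alpha}(8\beta+7)$, and therefore $N$ is good by definition.

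The argument uses no subtle number theory beyond tracking powers of $2$, so there is no real obstacle: the key observation is simply that the hypothesis $m \equiv 4 \pmod 8$ forces $v_2(2mp^r)$ and $v_2(m^2q^2)$ to be unequal and of opposite parity to the valuations allowed in the ``bad'' set. Positivity of $N$ is only required so that the statement ``$N$ is good'' is meaningful in the sense of the earlier definition.
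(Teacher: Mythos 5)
Your proof is correct and uses essentially the same idea as the paper: both arguments reduce to the observation that $v_2(2mp^r - m^2q^2) = 3$ (the paper phrases this as $\frac{2mp^r-m^2q^2}{4}\equiv 2\pmod 4$), which rules out the form $4^\alpha(8\beta+7)$. Your framing via the parity of the $2$-adic valuation makes the final deduction (why this congruence implies goodness) more explicit than the paper's terse ``implying,'' but it is the same computation at heart.
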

\begin{proof}
Note
$$\frac{2mp^r-m^2q^2}{4} = (2+4c)p^r - (4 + 16c+ 16c^2)q^2 \equiv 2 \quad \bmod{4}$$
implying that $2mp^r-m^2q^2$ is good.
\end{proof}
So for $m = 4, 12, 20,$  $2mp^r-m^2q^2$ is good if we can show it is positive.
 Let $r_0$ be the smallest positive integer such that $p^{r_0} > 10q^2$, which makes $2mp^{r_0}-m^2q^2=m(2p^{r_0}-mq^2)$ positive  for  $m<20$.
Then 
$p^{r_0 -1} < 10q^2$ (note this is a strict inequality since $p^{r_0 -1}$ is clearly not a divisible by $10$) and so 
$$ r_0 < 2\log_p q + \log_p 10 + 1.$$ Thus there exists three distinct 
$[b] \in \Lambda(2q)$, since each value of $m$ produces a different $b_0$, which means there are three distinct non-backtracking closed walks of length $2r_0$ from the identity vertex to itself. From Lemma~\ref{lem:scholium} we showed that this situation would imply
the abelian girth of $X^{p,q}$ is at most $8r_0.$
Since $n=q(q^2+1)$ and $d-1=p$
 we have   
\begin{equation}\label{eq:lpsbound}
\abl(X^{p,q}) \leq \frac{16}{3} \log_{d-1} n (1+o(1))
\end{equation}
for $p$ fixed and $q$ large.

The $16/3$ constant above may not be the optimal constant. In our arguments,
we found three closed walks of with a single starting vertex and length
$2r_0$.  If instead we had found three walks of length $r_0$
sharing distinct starting and terminating vertices the argument would have
given the constant of $4$ in \eqref{eq:lpsbound}.
Note that any closed walk from a vertex $v$ to itself of length $2r_0$
already gives two distinct walks of length $r_0$ from $v$ to the $m$, the
middle vertex of the cycle. Identifying one more walk from $v$ to $m$ of
length $r_0$  would be sufficient to improve the $16/3$ coefficient.

%\input{secpseudo}      % a discussion of pseudobundles
%\input{secconclude}	% concluding remarks
% \input{secsubsheaf}   % every subsheaf of gapless is gapless, and quotient

% \bibliographystyle{ams}
% \appendix
% \input{secstable}     % the stability conjecture and special cases
% \input{secme}         % maximum excess lemma on almost symmetric graphs
% \input{secsymmetric}	% symmetric sheaves on two vertices

\end{document}